\documentclass[12pt,leqno]{amsart}
\topmargin=0.02cm
\textwidth =  17cm
\textheight = 23cm
\baselineskip=11pt
\setlength{\oddsidemargin}{0.01 pt}
\setlength{\evensidemargin}{0.01 pt}

\usepackage{amsmath,amsfonts,amssymb,amsthm}
\usepackage{graphicx}
\graphicspath{ }
\usepackage{wrapfig}
\usepackage{accents}
\usepackage{caption}
\usepackage{subcaption}
\usepackage{calligra}

\numberwithin{figure}{section}
\newcommand\norm[1]{\left\lVert#1\right\rVert}

\theoremstyle{plain}
\newtheorem{theorem}{Theorem}[section]

\newtheorem{prop}[theorem]{Proposition}

\theoremstyle{definition}
\newtheorem{defn}{Definition}[section]

\newtheorem{exmp}{Example}[section]

\theoremstyle{remark}
\newtheorem*{rem}{Remark}

\usepackage{mathtools}
%\DeclarePairedDelimiterX{\norm}[1]{\lVert}{\rVert}{#1}
%opening
\title{Some properties of geodesic $(\alpha,E)$-preinvex functions on a Riemannian manifold}
\author[A. A. Shaikh, C. K. Mondal, R. P. Agarwal]{Absos Ali Shaikh$^1$, Chandan Kumar Mondal$^2$, Ravi P Agarwal$^3$}

\address{\noindent\newline $^{1}$Department of Mathematics,\newline University of
Burdwan, Golapbag,\newline Burdwan-713104,\newline West Bengal, India}
\email{aask2003@yahoo.co.in, aashaikh@math.buruniv.ac.in}
\address{\noindent\newline $^{2}$School of sciences,\newline Netaji Subhas Open University,\newline Durgapur Campus, Durgapur-713214, \newline West Bengal, India}
\email{chan.alge@gmail.com, chandanmondal@wbnsou.ac.in}
\address{\noindent\newline $^{3}$Department of Mathematics,\newline Texas A\&M University-Kingsville, Kingsville,\newline Texas  78363-8202,\newline USA}
\email{Ravi.Agarwal@tamuk.edu}
\begin{document}
\begin{abstract}
In this article, we have introduced the concept of \textit{geodesic $(\alpha,E)$-invex set} and by using this concept the notion of \textit{geodesic $(\alpha,E)$-preinvex functions} and \textit{geodesic $(\alpha,E)$-invex functions} are developed on a Riemannian manifold. Moreover, several properties and results are deduced within aforesaid functions. An example is also constructed to illustrate the definition of geodesic $(\alpha,E)$-invex set. We have also established an important relation between geodesic $(\alpha,E)$-preinvex function and geodesic $(\alpha,E)$-invex function in a complete Riemannian manifold.
\end{abstract}
\noindent\footnotetext{ $^*$ Corresponding author.\\
$\mathbf{2010}$\hspace{5pt}Mathematics\; Subject\; Classification: 53C22, 58E10, 53B20.\\ 
{Key words and phrases: Geodesic $(\alpha,E)$-Invex sets, $(\alpha,E)$-Invex functions, Geodesic $(\alpha,E)$-Preinvex functions, Riemannian manifolds } }
\maketitle
\section{Introduction}
Convex sets and convex functions play an important and significant role in the theory of nonlinear programming and optimization. Since, the notion of convexity has great impact in real world problems, various authors are developing the new concept of convexity in order to extend the result to the larger class of optimization. Hanson  in 1981, made a significant step by introducing the concept of invexity. Hanson's work stimulates further development of the role and applications of invexity in mathematical programming and other branches of pure and applied mathematics. Later, the role of preinvex functions and invex sets in optimization theory, variational inequalities and equilibrium problems were studied by Jeyakumar \cite{JEY85}, Weir and Mond \cite{WEI88} and various authors. In 1999, Younees \cite{YOU99} generalized the concept of convexity by placing an operator $E$ in the same domain and named it as $E$-convex set and $E$-convex function. However, Yang \cite{YAN01} proved that some of the results given by Youness are not correct. Again the concept of strong $E$-convexity and semi-strong $E$-convexity was defined in \cite{YOU05}. \\

\indent 
Over the past few years, many results in the theory of nonlinear analysis and optimization on Riemannian manifolds have been extended from the Euclidean space. 
Convex functions in Riemannian manifolds, were studied by many authors see \cite{10,9,CK17,CK19,8}. Udri\c{s}te \cite{UDR94} and Rapcsak \cite{1} developed the concept of geodesic convexity in Riemannian manifold. Pini \cite{PI94} introduced the concept of Riemannian invexity, while Mititelu \cite{MI01} investigated its generalization.  In 2012, Iqbal et al. \cite{3} established the notion of geodesic E-convex set and functions and also they have discussed their properties and results. Barani et al. \cite{BAR07} introduced the concepts of geodesic invex set and geodesic preinvex functions on Riemannian manifolds with respect to particular maps. In 2012, Agarwal et al. \cite{AM12} generalized the notion of invexity and developed the concept of $\alpha$-invex sets and $\alpha$-preinvex functions in Riemannian manifold. Agian, Kumari and Jayswal \cite{KJ18} introduced the notion of geodesic E-preinvex function and geodesic semi E-preinvex function on Riemannian manifold and investigated some of its properties.  Motivating by the work of Agarwal et al. and Kumari and Jayswal, we have developed the concept of $(\alpha,E)$-invex sets and $(\alpha,E)$-preinvex functions in a complete Riemannian manifold and investigated some of its properties.\\

\indent 
The paper is structured as follows: Section 2 deals with some well known facts of Riemannian manifolds, geodesic convexity and geodesic invexity. In the next section, we have defined \textit{geodesic $(\alpha,E)$-invex set} in a complete Riemannian manifold. And by using this definition we have developed the concept of \textit{geodesic $(\alpha,E)$-preinvex} and \textit{geodesic $(\alpha,E)$-invex functions}. In the last section, we have deduced a relation between geodesic $(\alpha,E)$-preinvex and geodesic $(\alpha,E)$-invex function,( see Theorem 4.2), which is the main result of this paper. Also we have deduced some properties of $(\alpha,E)$-preinvex and geodesic $(\alpha,E)$-invex functions.

\section{Notations and Preliminaries}
In this section we have recalled some basic concept of a Riemannian manifold $(M,g)$, which is necessary throughout this paper (for reference see \cite{JOS11}). The length $L(\gamma)$ of the curve $\gamma:[a,b]\rightarrow M$ is given by
\begin{eqnarray*}
L(\gamma)&=&\int_{a}^{b}\sqrt{g_{\gamma(t)}(\dot{\gamma}(t),\dot{\gamma}(t))}\ dt\\
&=&\int_{a}^{b}\norm {\dot{\gamma}(t)}dt.
\end{eqnarray*}
The curve $\gamma$ is said to be a geodesic if its velocity vector $\gamma'(t)$ is parallel along $\gamma(t)$, i.e., $\nabla_{\dot{\gamma}(t)}\dot{\gamma}(t)=0\ \forall t\in [a,b]$, where $\nabla$ is the Riemannian connection of $g$.
For any point $p\in M$ and the unique geodesic $\gamma_u$ with $\gamma(0)=p$ and $\dot{\gamma}_u(0)=u$, the exponential map $exp_p:V_p\rightarrow M$ is defined by
$$exp_p(u)=\gamma_u(1),$$
where  and $V_p$ is a collection of vectors of $T_pM$ such that for each element $u\in V_p$, the geodesic with initial tangent vector $u$ is defined on $[0,1]$. It can be easily seen that the norm of a tangent vector is constant for a geodesic $\gamma$. A smooth vector field is a smooth function $X:M\rightarrow TM$ such that $\pi\circ X=id_M$, where $\pi:TM\rightarrow M$ is the projection map, i.e, in each point of the manifold $M$ we smoothly choose a tangent vector. The gradient of a function $f\in C^\infty(M)$ at the point $p\in M$ is defined by $\nabla f(p)=g^{ij}\frac{\partial f}{\partial x_{j}}\frac{\partial}{\partial x_i}\mid_p,$ where $\{\frac{\partial}{\partial x_i}\}_{i=1}^n|_p$ is an orthonormal coordinate system for $T_pM$.
\begin{defn}\cite{UDR94}
 A real valued function $f$ on $M$ is called convex if 
\begin{equation*}
f\circ\gamma(t)\leq (1-t)f\circ\gamma(0)+tf\circ\gamma(1)\quad \forall t\in [0,1],
\end{equation*}
for every geodesic $\gamma:[0,1]\rightarrow M$.
\end{defn}
\begin{defn}\cite{BAR07}
Let $M$ be a Riemannian manifold and $\eta: M\times M\rightarrow TM$ be a function such that for every $x,y\in M$, $\eta(x,y)\in T_yM$. A nonempty subset $S$ of $M$ is said to be geodesic invex with respect to $\eta$ if for every $x,y\in S$ there exists exactly one geodesic $\gamma_{x,y}:[0,1]\rightarrow M$ such that
$$\gamma_{x,y}(0)=y,\ \gamma'_{x,y}(0)=\eta(x,y),\ \gamma_{x,y}(t)\in S, \quad \forall t\in [0,1].$$
\end{defn}
\begin{defn}\cite{BAR07}
Let $S$ be an open set of the Riemannian manifold $M$ which is geodesic invex set with respect to $\eta:M\times M\rightarrow TM$. A differentiable function $f:S\rightarrow \mathbb{R}$ is said to be $\eta$-invex on $S$ if the following condition holds
$$f(x)-f(y)\geq df_y(\eta(x,y)),\ \forall x,y\in S.$$
$f$ is called geodesic $\eta$-preinvex if for every $x,y\in S$,
$$f(\gamma_{x,y}(t))\leq tf(x)+(1-t)f(y),\ \forall t\in [0,1].$$
\end{defn}
\section{$(\alpha,E)$-invex set and $(\alpha,E)$-preinvex function}
\indent Suppose $E:M\rightarrow M$ is a function and $\alpha:M\times M\rightarrow\mathbb{R}-\{0\}$ is a bifunction. We denote the complete Riemannian manifold by the notation $M$.
\begin{defn}
Let $\eta:M\times M\rightarrow TM$ be a function and $\alpha:M\times M\rightarrow\mathbb{R}-\{0\}$ be a bifunction such that $\alpha(x,y)\eta(x,y)\in T_yM$ for every $x,y\in M$. A subset $S\neq\phi$ of $M$ is said to be a geodesic $(\alpha,E)$-invex set with respect to $\eta$ and $\alpha$ if for every $x,y\in S$, there exists exactly one geodesic $\gamma_{E(x),E(y)}:[0,1]\rightarrow M$ such that
$$\gamma_{E(x),E(y)}(0)=E(y),\ \gamma'_{E(x),E(y)}(0)=\alpha(E(x),E(y))\eta(E(x),E(y)),\ \gamma_{E(x),E(y)}(t)\in S, \quad \forall t\in [0,1].$$
\end{defn}
If $\alpha(p,q)=1$ for all $p,q\in M$ and $E$ is the identity map, then geodesic $(\alpha,E)-$invex set becomes geodesic invex set \cite{BAR07}. Geodesic  $(\alpha,E)-$invex set reduces to geodesic $\alpha$-invex set \cite{AM12} if $E(p)=p$ for all $p\in M$. If  only $\alpha \equiv 1$, then geodesic $(\alpha,E)-$invex set becomes geodesic $E$-invex set \cite{KJ18}.
\begin{defn}
Let $S$ be an open subset of $M$ which is geodesic $(\alpha,E)$-invex set with respect to $\eta:M\times M\rightarrow TM$ and $\alpha:M\times M\rightarrow\mathbb{R}-\{0\}$. A function $f:S\rightarrow\mathbb{R}$ is said to be geodesic $(\alpha,E)-$preinvex if 
$$f(\gamma_{E(x),E(y)}(t))\leq tf(E(x))+(1-t)f(E(y)), \quad \forall x,y\in S\text{ and }t\in [0,1],$$
where $\gamma_{E(x),E(y)}$ is the unique geodesic.
\end{defn}
 
\begin{defn}
Let $S$ be an open subset of $M$ which is geodesic $(\alpha,E)$-invex set with respect to $\eta:M\times M\rightarrow TM$ and $\alpha:M\times M\rightarrow\mathbb{R}-\{0\}$. A differentiable function $f:S\rightarrow\mathbb{R}$ is said to be geodesic $(\alpha,E)-$invex if 
$$f(E(x))-f(E(y))\geq df_{E(y)}(\alpha(E(x),E(y))\eta(E(x),E(y))), \quad \forall x,y\in S.$$
\end{defn}
\begin{exmp}
Let $M$ be a Cartan-Hadamard manifold and $x_0,y_0\in M$ such that $x_0\neq y_0$. Consider two open ball $B(x_0,r_1)$ and $B(y_0,r_2)$ of radius $r_1$ and $r_2$ respectively, such that $B(x_0,r_1)\cap B(y_0,r_2)=\phi $ for some $0<r_1,r_2<\frac{1}{2}d(x_0,y_0)$. Now take
$$A=B(x_0,r_1)\cup B(y_0,r_2).$$ Then it is obvious that $A$ is not geodesic convex. Now define the functions $E:M\rightarrow M$ and $\eta:M\times M\rightarrow M$ such that

\[\eta(x,y)= \begin{cases} 
      exp_y^{-1}x & x,y\in B(x_0,r_1)\text{ or }x,y\in B(y_0,r_2) \\
      0_y & \text{ otherwise.} 
   \end{cases}
\]
\begin{equation*}
E(x)=\{y\in \gamma_{x,y}:d(x_0,y)=r_1/2\} \text{ for all }x\in A,
\end{equation*}
where $\gamma_{x,y}$ denotes the geodesic joining $x$ and $y$ whose existence is ensured in Cartan-Hadamard manifolds.\\
For every $x,y\in M$, consider a bifunction $\alpha:M\times M\rightarrow\mathbb{R}-\{0\}$ and $\gamma_{E(x),E(y)}:[0,1]\rightarrow M$ defined by
$$\gamma_{E(x),E(y)}(t)=exp_{E(y)}(t\alpha(E(x),E(y))\eta(E(x),E(y))), \text{ for all }t\in [0,1].$$
Then, 
$$\gamma_{E(x),E(y)}(0)=E(y),\ \gamma'_{E(x),E(y)}(0)=\alpha(E(x),E(y))\eta(E(x),E(y)).$$
Now simple calculation shows that $A$ is a geodesic $(\alpha,E)$-invex set.
\end{exmp}
\section{Main Results}

\begin{prop}
Suppose $S$ is an open subset of $M$ which is geodesic $(\alpha,E)$-invex set with respect to $\eta:M\times M\rightarrow TM$ and $\alpha:M\times M\rightarrow\mathbb{R}-\{0\}$. Suppose $f:S\rightarrow\mathbb{R}$ is  a geodesic $(\alpha,E)$-preinvex function, then every lower section of $f$ defined by 
$$S_\lambda:=\{x\in S:f(x)\leq \lambda\},\quad \lambda\in \mathbb{R},$$
is a geodesic $(\alpha,E)$-invex set with respect to $\eta$ and $\alpha$.
\end{prop}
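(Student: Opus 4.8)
The plan is to show that for any $\lambda \in \mathbb{R}$, the lower section $S_\lambda$ inherits the geodesic $(\alpha,E)$-invexity from $S$. First I would fix two arbitrary points $x, y \in S_\lambda$. By definition $x, y \in S$, and since $S$ is a geodesic $(\alpha,E)$-invex set, there exists exactly one geodesic $\gamma_{E(x),E(y)}:[0,1] \rightarrow M$ satisfying the three defining conditions: $\gamma_{E(x),E(y)}(0) = E(y)$, $\gamma'_{E(x),E(y)}(0) = \alpha(E(x),E(y))\eta(E(x),E(y))$, and $\gamma_{E(x),E(y)}(t) \in S$ for all $t \in [0,1]$. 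The existence and uniqueness of this geodesic, along with the first two conditions, transfer immediately to $S_\lambda$, so the only thing left to verify is the containment condition, namely that $\gamma_{E(x),E(y)}(t) \in S_\lambda$ for all $t \in [0,1]$.

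To establish this containment, I would invoke the geodesic $(\alpha,E)$-preinvexity of $f$. For $x, y \in S_\lambda$ we have $f(E(x)) \leq \lambda$ and $f(E(y)) \leq \lambda$, provided we can guarantee that the preinvexity inequality bounds $f$ along the geodesic by the values $f(E(x))$ and $f(E(y))$. The preinvexity hypothesis gives, for every $t \in [0,1]$,
\begin{equation*}
f(\gamma_{E(x),E(y)}(t)) \leq t f(E(x)) + (1-t) f(E(y)).
\end{equation*}
The right-hand side is a convex combination of $f(E(x))$ and $f(E(y))$, hence is bounded above by $\max\{f(E(x)), f(E(y))\}$, which in turn is at most $\lambda$. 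Therefore $f(\gamma_{E(x),E(y)}(t)) \leq \lambda$, which is precisely the statement that $\gamma_{E(x),E(y)}(t) \in S_\lambda$ for all $t \in [0,1]$. Combining this with the geodesic data already obtained from $S$ shows that $S_\lambda$ satisfies the definition of a geodesic $(\alpha,E)$-invex set.

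The subtle point — and the one I would take care to address explicitly — is the interplay between the membership of $x, y$ in $S_\lambda$ and the values $f(E(x)), f(E(y))$ appearing in the preinvexity inequality. The definition of $S_\lambda$ controls $f(x)$ and $f(y)$, whereas the preinvexity inequality involves $f(E(x))$ and $f(E(y))$; strictly speaking, the argument requires these to coincide or at least that $f(E(x)), f(E(y)) \leq \lambda$. I expect this is handled by an implicit standing assumption (common in the $E$-convexity literature) that $E(S) \subseteq S$ together with the observation that the lower section is naturally taken with respect to the composite values, or by redefining $S_\lambda$ consistently with the $E$-shifted function. I would flag this as the main obstacle and resolve it by making the convention precise before running the convex-combination bound, after which the remainder of the argument is entirely routine.
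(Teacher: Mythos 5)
Your proof is correct and follows essentially the same route as the paper's: fix $x,y\in S_\lambda$, take the unique geodesic guaranteed by the $(\alpha,E)$-invexity of $S$, apply the preinvexity inequality, and bound the convex combination by $\lambda$. The subtlety you flag at the end is genuine and is in fact present, unaddressed, in the paper's own proof: from $x,y\in S_\lambda$ one only gets $f(x),f(y)\leq\lambda$, yet the paper writes $tf(E(x))+(1-t)f(E(y))\leq t\lambda+(1-t)\lambda$ with no justification that $f(E(x)),f(E(y))\leq\lambda$, so some additional hypothesis (e.g.\ $E(S_\lambda)\subseteq S_\lambda$, or redefining the lower section via $f\circ E$) is indeed needed, exactly as you suspected.
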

\begin{proof}
Let $x,y\in S_\lambda$. Since $S$ is a geodesic $(\alpha,E)$-invex set with respect to $\eta$ and $\alpha$, there exists exactly one geodesic $\gamma_{E(x),E(y)}:[0,1]\rightarrow M$ such that 
$$\gamma_{E(x),E(y)}(0)=E(y),\ \gamma'_{E(x),E(y)}(0)=\alpha(E(x),E(y))\eta(E(x),E(y)),\ \gamma_{E(x),E(y)}(t)\in S, \quad \forall t\in [0,1].$$
Now by geodesic $(\alpha,E)$-preinvexity of $f$ we have
\begin{eqnarray*}
f(\gamma_{E(x),E(y)}(t))&\leq& tf(E(x))+(1-t)f(E(y))\\
&\leq& t\lambda+(1-t)\lambda=\lambda.
\end{eqnarray*}
Hence $\gamma_{E(x),E(y)}(t)\in S_\lambda\ \forall t\in [0,1]$. Therefore, $S_\lambda$ is a geodesic $(\alpha,E)$-invex set with respect to $\eta$ and $\alpha$.
\end{proof}
\begin{theorem}
Let $S\subset M$ be a geodesic $(\alpha,E)$-invex set with respect to $\eta$ and $\alpha$. If a function $f:S\rightarrow \mathbb{R}$ is differentiable and geodesic $(\alpha,E)$-preinvex on $S$, then $f$ is a geodesic $(\alpha,E)$-invex function on $S$.
\end{theorem}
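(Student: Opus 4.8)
The plan is to derive the invexity inequality directly from the preinvexity inequality by forming an appropriate difference quotient along the defining geodesic and letting the geodesic parameter tend to zero.

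First I would fix arbitrary $x,y\in S$ and abbreviate $\gamma:=\gamma_{E(x),E(y)}$, the unique geodesic furnished by the geodesic $(\alpha,E)$-invexity of $S$, so that $\gamma(0)=E(y)$ and $\gamma'(0)=\alpha(E(x),E(y))\eta(E(x),E(y))$. Starting from the preinvexity hypothesis
\[
f(\gamma(t))\leq t\,f(E(x))+(1-t)\,f(E(y)),\qquad t\in[0,1],
\]
and subtracting $f(E(y))=f(\gamma(0))$ from both sides, I would rearrange to obtain, for every $t\in(0,1]$,
\[
\frac{f(\gamma(t))-f(\gamma(0))}{t}\leq f(E(x))-f(E(y)).
\]

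Next I would pass to the limit $t\to 0^{+}$. Since $f$ is differentiable on $S$ and $\gamma$ is a smooth geodesic, the composition $f\circ\gamma$ is differentiable at $0$, so the left-hand side converges to $(f\circ\gamma)'(0)$. By the chain rule this limit equals $df_{\gamma(0)}(\gamma'(0))=df_{E(y)}\big(\alpha(E(x),E(y))\eta(E(x),E(y))\big)$. Because the inequality is non-strict it is preserved in the limit, yielding
\[
df_{E(y)}\big(\alpha(E(x),E(y))\eta(E(x),E(y))\big)\leq f(E(x))-f(E(y)),
\]
which is precisely the defining inequality of a geodesic $(\alpha,E)$-invex function. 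As $x,y\in S$ were arbitrary, the conclusion follows.

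The argument is essentially routine once the correct difference quotient is isolated; the only step requiring genuine care is the identification of the one-sided limit of that quotient with the directional derivative $df_{E(y)}(\gamma'(0))$. This rests on the smoothness of the geodesic $\gamma$, the differentiability of $f$, and crucially on the initial-velocity condition $\gamma'(0)=\alpha(E(x),E(y))\eta(E(x),E(y))$ built into the definition of the geodesic $(\alpha,E)$-invex set, which is what makes the limiting slope coincide with the required invexity term.
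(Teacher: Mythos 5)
Your proposal is correct and follows exactly the same route as the paper's proof: rearrange the preinvexity inequality into the difference quotient $\frac{1}{t}\bigl[f(\gamma(t))-f(\gamma(0))\bigr]\leq f(E(x))-f(E(y))$, let $t\to 0^{+}$, and identify the limit with $df_{E(y)}\bigl(\alpha(E(x),E(y))\eta(E(x),E(y))\bigr)$ using the initial conditions of the geodesic. Your write-up is in fact slightly more careful than the paper's, since you make explicit the one-sided nature of the limit and the role of the chain rule.
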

\begin{proof}
Since $S$ is a geodesic $(\alpha,E)$-invex set with respect to $\eta$ and $\alpha$, there exists exactly one geodesic $\gamma_{E(x),E(y)}:[0,1]\rightarrow M$ such that 
$$\gamma_{E(x),E(y)}(0)=E(y),\ \gamma'_{E(x),E(y)}(0)=\alpha(E(x),E(y))\eta(E(x),E(y)),\ \gamma_{E(x),E(y)}(t)\in S, \quad \forall t\in [0,1].$$
Again $f$ is geodesic $(\alpha,E)$-preinvex function. therefore, we have
$$f(\gamma_{E(x),E(y)}(t))\leq tf(E(x))+(1-t)f(E(y)),$$
i.e.,
$$f(\gamma_{E(x),E(y)}(t))-f(E(y))\leq t(f(E(x))-f(E(y))). $$
On dividing by $t$, we get
$$\frac{1}{t}[f(\gamma_{E(x),E(y)}(t))-f(E(y))]\leq (f(E(x))-f(E(y))). $$
Now taking limit as $t\rightarrow 0$
$$df_{\gamma_{E(x),E(y)}(0)}(\gamma'_{E(x),E(y)}(0))\leq f(E(x))-f(E(y)).$$
Therefore,
$$ df_{E(y)}(\alpha(E(x),E(y))\eta(E(x),E(y)))\leq f(E(x))-f(E(y)).$$
Which shows that $f$ is a geodesic $(\alpha,E)$-invex function.
\end{proof}
\begin{defn}(Property (P))
Let $M$ be a Riemannian manifold and $\gamma_{E(x),E(y)}:[0,1]\rightarrow M$ be a curve on $M$ such that $\gamma_{E(x),E(y)}(0)=E(y)$ and $\gamma_{E(x),E(y)}(1)=E(x)$. Then $\gamma_{E(x),E(y)}$ is said to possess the Property (P) with respect to $x,y\in M$ if
$$\gamma'_{E(x),E(y)}(s)(t-s)=\alpha(\gamma_{E(x),E(y)}(t),\gamma_{E(x),E(y)}(s))\eta(\gamma_{E(x),E(y)}(t),\gamma_{E(x),E(y)}(s)),\ \forall s,t\in [0,1].$$
\end{defn}
\begin{rem}
If $\alpha=1$, then the above property reduces to the property defined by Kumari and Jayswal \cite{KJ18}. Agarwal et. al. \cite{AM12} defined the above property when $E$ is the identity map. If $\alpha=1$ and $E$ is the identity map, then the above property is defined by Pini \cite{PI94}.
\end{rem}
Let $M$ be a Riemannian manifold and $\gamma_{E(x),E(y)}$ possessing the Property (P) with respect to $x,y\in M$, then
\begin{eqnarray*}
\alpha(E(x),E(y))\eta(E(x),E(y))&=&\alpha(\gamma_{E(x),E(y)}(1),\gamma_{E(x),E(y)}(0))\eta(\gamma_{E(x),E(y)}(1),\gamma_{E(x),E(y)}(0))\\
&=&\gamma'_{E(x),E(y)}(0).
\end{eqnarray*}
In this case where $\gamma_{E(x),E(y)}$ is a geodesic, then
\begin{eqnarray*}
&&\alpha(\gamma_{E(x),E(y)}(0),\gamma_{E(x),E(y)}(s))\eta(\gamma_{E(x),E(y)}(0),\gamma_{E(x),E(y)}(s))\\
&&= -s\gamma'_{E(x),E(y)}(s)\\
&&= -sP^s_{0,\gamma_{E(x),E(y)}}[\gamma'_{E(x),E(y)}(0)]\\
&&= -sP^s_{0,\gamma_{E(x),E(y)}}[\alpha(E(x),E(y))\eta(E(x),E(y))].
\end{eqnarray*}
or,
$$P^0_{s,\gamma_{E(x),E(y)}}[\alpha(E(y),\gamma_{E(x),E(y)}(s))\eta(E(y),\gamma_{E(x),E(y)}(s))]=-s\alpha(E(x),E(y))\eta(E(x),E(y)).$$
\begin{eqnarray*}
&&\alpha(\gamma_{E(x),E(y)}(1),\gamma_{E(x),E(y)}(s))\eta(\gamma_{E(x),E(y)}(1),\gamma_{E(x),E(y)}(s))\\
&&= (1-s)\gamma'_{E(x),E(y)}(s)\\
&&= (1-s)P^s_{0,\gamma_{E(x),E(y)}}[\gamma'_{E(x),E(y)}(0)]\\
&&= (1-s)P^s_{0,\gamma_{E(x),E(y)}}[\alpha(E(x),E(y))\eta(E(x),E(y))].
\end{eqnarray*}
or,
$$P^0_{s,\gamma_{E(x),E(y)}}[\alpha(E(x),\gamma_{E(x),E(y)}(s))\eta(E(x),\gamma_{E(x),E(y)}(s))]=(1-s)\alpha(E(x),E(y))\eta(E(x),E(y)).$$
Therefore,
\begin{equation*}
  (C) 
    \begin{cases}
      P^0_{s,\gamma_{E(x),E(y)}}[\alpha(E(y),\gamma_{E(x),E(y)}(s))\eta(E(y),\gamma_{E(x),E(y)}(s))]&=-s\alpha(E(x),E(y))\eta(E(x),E(y)\\
      P^0_{s,\gamma_{E(x),E(y)}}[\alpha(E(x),\gamma_{E(x),E(y)}(s))\eta(E(x),\gamma_{E(x),E(y)}(s))]&=(1-s)\alpha(E(x),E(y))\\
      &\eta(E(x),E(y)),
    \end{cases}       
\end{equation*}
for all $s\in [0,1]$. The above two conditions we call Condition (C).
\begin{theorem}
Let $S$ be an open subset of $M$ which is geodesic $(\alpha,E)$-invex set with respect to $\eta:M\times M\rightarrow TM$ and $\alpha:M\times M\rightarrow\mathbb{R}-\{0\}$. Let $f:S\rightarrow\mathbb{R}$ be a differentiable function and satisfies the condition (C). Then $f$ is a geodesic $(\alpha,E)$-preinvex on $S$ if $f$ is $(\alpha,E)$-invex on $S$.
\end{theorem}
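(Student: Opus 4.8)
The plan is to run the familiar ``invexity $\Rightarrow$ preinvexity'' argument, but with the geodesic supplied by the invex structure of $S$ and with the gradient terms reorganized through Condition (C). Fix $x,y\in S$ and write $\gamma:=\gamma_{E(x),E(y)}$ for the unique geodesic with $\gamma(0)=E(y)$ and, by Property (P), $\gamma(1)=E(x)$; since $S$ is geodesic $(\alpha,E)$-invex, $\gamma(t)\in S$ for every $t\in[0,1]$, so $f$ and its differential are defined all along $\gamma$. Fix $t\in[0,1]$ and apply the $(\alpha,E)$-invexity of $f$ at the base point $\gamma(t)$ twice, once against $E(x)$ and once against $E(y)$, obtaining
$$f(E(x))-f(\gamma(t))\ge df_{\gamma(t)}\big(\alpha(E(x),\gamma(t))\eta(E(x),\gamma(t))\big)$$
and
$$f(E(y))-f(\gamma(t))\ge df_{\gamma(t)}\big(\alpha(E(y),\gamma(t))\eta(E(y),\gamma(t))\big).$$

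Next I would take the convex combination of these two inequalities with weights $t$ and $1-t$. The left-hand side becomes $tf(E(x))+(1-t)f(E(y))-f(\gamma(t))$, which is exactly the quantity whose nonnegativity encodes preinvexity, while by linearity of $df_{\gamma(t)}$ the right-hand side becomes $df_{\gamma(t)}$ evaluated on the single tangent vector
$$V:=t\,\alpha(E(x),\gamma(t))\eta(E(x),\gamma(t))+(1-t)\,\alpha(E(y),\gamma(t))\eta(E(y),\gamma(t))\in T_{\gamma(t)}M.$$

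The crux is to show $V=0$. For this I would transport $V$ back to $T_{E(y)}M$ by the parallel transport $P^0_{t,\gamma}$ and invoke Condition (C) at $s=t$: its two branches give $P^0_{t,\gamma}[\alpha(E(x),\gamma(t))\eta(E(x),\gamma(t))]=(1-t)w$ and $P^0_{t,\gamma}[\alpha(E(y),\gamma(t))\eta(E(y),\gamma(t))]=-t\,w$, where $w:=\alpha(E(x),E(y))\eta(E(x),E(y))$. By linearity of parallel transport, $P^0_{t,\gamma}[V]=t(1-t)w-(1-t)t\,w=0$, and since $P^0_{t,\gamma}$ is a linear isomorphism it follows that $V=0$. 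Hence $df_{\gamma(t)}(V)=0$, the combined inequality collapses to $f(\gamma(t))\le tf(E(x))+(1-t)f(E(y))$, and as $t\in[0,1]$ and $x,y\in S$ were arbitrary this is precisely geodesic $(\alpha,E)$-preinvexity.

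The step I expect to require the most care is the legitimacy of evaluating the invexity inequality at the base point $\gamma(t)$ rather than at an explicit $E$-image: Condition (C) is phrased in exactly the terms $\alpha(E(x),\gamma(t))\eta(E(x),\gamma(t))$ and $\alpha(E(y),\gamma(t))\eta(E(y),\gamma(t))$, so the two inequalities above are the intended instances of invexity taken along the geodesic. The only genuinely substantive point is the cancellation $V=0$, which hinges on reading off the correct signs $(1-t)$ and $-t$ from the two branches of (C) and on the fact that $P^0_{t,\gamma}$ is both linear and invertible; everything else is bookkeeping of the convex combination, and the endpoints $t=0,1$ are immediate since no division by $t$ is used.
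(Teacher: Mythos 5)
Your proposal is correct and follows essentially the same route as the paper: apply the $(\alpha,E)$-invexity inequality twice at the point $\gamma(t)$ on the geodesic, take the convex combination with weights $t$ and $1-t$, and use Condition (C) together with the linearity (and invertibility) of parallel transport to see that the combined tangent vector vanishes, which yields the preinvexity inequality. The subtlety you flag about evaluating invexity at $\gamma(t)$ versus an explicit $E$-image is present in the paper's own proof as well (which writes $E(q)$ in the invexity step but then applies Condition (C) as if the base point were $q=\gamma(t)$), so your treatment is no less rigorous than the original.
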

\begin{proof}
Since $S$ is a geodesic $(\alpha,E)$-invex set with respect to $\eta$ and $\alpha$, there exists a unique geodesic  $\gamma_{E(x),E(y)}:[0,1]\rightarrow M$ such that 
$$\gamma_{E(x),E(y)}(0)=E(y),\ \gamma'_{E(x),E(y)}(0)=\alpha(E(x),E(y))\eta(E(x),E(y)),\ \gamma_{E(x),E(y)}(t)\in S, \quad \forall t\in [0,1].$$
Now, fix $t\in [0,1]$ and set $q=\gamma_{E(x),E(y)}(t)$, then using $(\alpha,E)$-invexity of $f$ on $S$, we have
\begin{equation}\label{eq1}
f(E(x))-f(E(q))\geq df_{E(q)}(\alpha(E(x),E(q))\eta(E(x),E(q))).
\end{equation}
\begin{equation}\label{eq2}
f(E(y))-f(E(q))\geq df_{E(q)}(\alpha(E(y),E(q))\eta(E(y),E(q))).
\end{equation}
On multiplying (\ref{eq1}) by $t$ and (\ref{eq2}) by $(1-t)$, respectively, and then adding we get
\begin{eqnarray}\label{eq3}
tf(E(x))+(1-t)f(E(y))-f(E(q))&\geq & df_{E(q)}[t\alpha(E(x),E(q))\eta(E(x),E(q))\\ \nonumber &&+(1-t)\alpha(E(y),E(q))\eta(E(y),E(q))].
\end{eqnarray}
By the condition (C), we have
\begin{eqnarray*}
&& t\alpha(E(x),E(q))\eta(E(x),E(q))+(1-t)\alpha(E(y),E(q))\eta(E(y),E(q))\\
&&= t(1-t)P^t_{0,\gamma}[\alpha(E(x),E(y))\eta(E(x),E(y))]-(1-t)tP^t_{0,\gamma}[\alpha(E(x),E(y))\eta(E(x),E(y))]\\
&&=0.
\end{eqnarray*}
Now combining the above relation with the inequality (\ref{eq3}), we get
$$tf(E(x))+(1-t)f(E(y))-f(E(q))\geq 0.$$
Which implies that
$$f(\gamma_{E(x),E(y)}(t))\leq tf(E(x))+(1-t)f(E(y)).$$
Therefore, the function $f$ is a geodesic $(\alpha,E)$-preinvex on $S$.
\end{proof}
\begin{theorem}
Let $S\subset M$ be a geodesic $(\alpha,E)$-invex set with respect to $\eta:M\times M\rightarrow TM$ and $\alpha:M\times M\rightarrow R$. 
A function $f:S\rightarrow R$ is a geodesic $(\alpha,E)$-preinvex and $\phi:I\rightarrow R$ is an increasing geodesic pre-invex such that range $f\subset I$, where $I$ is any interval. Then the composite function $\phi\circ f$ is a geodesic $(\alpha,E)$-preinvex on S.
\end{theorem}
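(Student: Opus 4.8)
The plan is to verify the defining inequality of geodesic $(\alpha,E)$-preinvexity for $\phi\circ f$ directly, by chaining together three facts: the preinvexity of $f$, the monotonicity of $\phi$, and the preinvexity (i.e.\ convexity on the interval) of $\phi$. Fix $x,y\in S$ and $t\in[0,1]$. Since $S$ is geodesic $(\alpha,E)$-invex, there is a unique geodesic $\gamma_{E(x),E(y)}$ with $\gamma_{E(x),E(y)}(0)=E(y)$ and $\gamma_{E(x),E(y)}(t)\in S$ for all $t$, and since $f$ is geodesic $(\alpha,E)$-preinvex,
\[
f(\gamma_{E(x),E(y)}(t))\leq t f(E(x))+(1-t)f(E(y)).
\]

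Next I would apply $\phi$ to both sides. Because $\phi$ is increasing, it preserves the inequality, giving
\[
\phi\bigl(f(\gamma_{E(x),E(y)}(t))\bigr)\leq \phi\bigl(t f(E(x))+(1-t)f(E(y))\bigr).
\]
Here one must know that both arguments lie in $I$; this is exactly guaranteed by the hypothesis $\operatorname{range} f\subset I$, so that $f(E(x)),f(E(y))\in I$ and the combination $t f(E(x))+(1-t)f(E(y))$ lies in $I$ as well, an interval being convex.

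The final step uses that $\phi$ is geodesic preinvex on $I$. Writing $a=f(E(x))$ and $b=f(E(y))$, the geodesic of the real interval $I$ joining them (with the standard structure, where $\eta(a,b)=a-b$ and geodesics are straight segments) is $\gamma_{a,b}(t)=(1-t)b+ta$, whose value at parameter $t$ is precisely $ta+(1-t)b$. Hence the preinvexity inequality for $\phi$ reads $\phi\bigl(t a+(1-t)b\bigr)=\phi(\gamma_{a,b}(t))\leq t\phi(a)+(1-t)\phi(b)$. Combining this with the previous display yields
\[
(\phi\circ f)(\gamma_{E(x),E(y)}(t))\leq t(\phi\circ f)(E(x))+(1-t)(\phi\circ f)(E(y)),
\]
which is the definition of geodesic $(\alpha,E)$-preinvexity for $\phi\circ f$, completing the argument.

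I expect the only genuinely delicate point to be the interpretation of ``$\phi$ is an increasing geodesic preinvex function on the interval $I$'': one has to recognise that on a real interval the geodesic joining $a$ and $b$ passes through the ordinary convex combination $ta+(1-t)b$ at parameter $t$, so that geodesic preinvexity of $\phi$ collapses to the familiar convexity-type inequality $\phi(ta+(1-t)b)\le t\phi(a)+(1-t)\phi(b)$. Everything else is a routine chaining of inequalities; monotonicity of $\phi$ is precisely what allows the two preinvexity estimates (one on $M$, one on $I$) to be composed in the correct direction.
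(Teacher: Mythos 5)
Your proposal is correct and follows essentially the same route as the paper's own proof: apply the preinvexity of $f$, then the monotonicity of $\phi$, then the convexity-type inequality coming from the preinvexity of $\phi$ on the interval $I$. Your explicit remark that the geodesic of a real interval through $a$ and $b$ is the segment $t\mapsto ta+(1-t)b$, so that geodesic preinvexity of $\phi$ collapses to $\phi(ta+(1-t)b)\le t\phi(a)+(1-t)\phi(b)$, is a point the paper uses implicitly without comment.
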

\begin{proof}
Since $S$ is a geodesic $(\alpha,E)$-invex set with respect to $\eta$ and $\alpha$, there exists a unique geodesic  $\gamma_{E(x),E(y)}:[0,1]\rightarrow M$ such that 
$$\gamma_{E(x),E(y)}(0)=E(y),\ \gamma'_{E(x),E(y)}(0)=\alpha(E(x),E(y))\eta(E(x),E(y)),\ \gamma_{E(x),E(y)}(t)\in S, \quad \forall t\in [0,1].$$
By the defnition of geodesic $(\alpha,E)$-preinvex function, we have
$$f(\gamma_{E(x),E(y)}(t))\leq tf(E(x))+(1-t)f(E(y)).$$
As $\phi$ is an increasing geodesic pre-invex function, we get
$$\phi\circ f(\gamma_{E(x),E(y)}(t))\leq \phi (tf(E(x))+(1-t)f(E(y)))$$
\begin{eqnarray*}
(\phi\circ f)(\gamma_{E(x),E(y)}(t))&\leq & t\phi (f(E(x)))+(1-t)\phi(f(E(y)))\\
&=&  t(\phi\circ f)(E(x))+(1-t)(\phi\circ f)(E(y)).
\end{eqnarray*}
Therefore, $\phi\circ f$ is a geodesic $(\alpha,E)$-preinvex function on S.
\end{proof}
\begin{theorem}
Let $S\subset M$ be a geodesic $(\alpha,E)$-invex set with respect to $\eta:M\times M\rightarrow TM$ and $\alpha:M\times M\rightarrow R$. If $f_i:S\rightarrow R$, $j\in J$ are geodesic $(\alpha,E)$-preinvex functions on S such that $\sup_{j\in J}f_j$ exist in $\mathbb{R}$, then the function $f:S\rightarrow R$ defined by 
$$f(x)=\sup_{j\in J}f_j(x), \text{ for }x\in S,$$
is a geodesic $(\alpha,E)$-preinvex function on S.
\end{theorem}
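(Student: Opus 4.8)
The plan is to exploit the fact that the geodesic $\gamma_{E(x),E(y)}$ furnished by the geodesic $(\alpha,E)$-invexity of $S$ depends only on the set $S$ together with the data $\eta$ and $\alpha$, and not on any particular function in the family. Consequently one and the same geodesic can be used simultaneously for every member $f_j$, and the individual preinvexity inequalities can then be amalgamated by passing to a supremum. No new geometric input beyond the definitions is needed.

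First I would fix $x,y\in S$ and $t\in[0,1]$ and invoke the geodesic $(\alpha,E)$-invexity of $S$ to produce the unique geodesic $\gamma_{E(x),E(y)}:[0,1]\rightarrow M$ with $\gamma_{E(x),E(y)}(0)=E(y)$, $\gamma'_{E(x),E(y)}(0)=\alpha(E(x),E(y))\eta(E(x),E(y))$ and $\gamma_{E(x),E(y)}(t)\in S$ for all $t\in[0,1]$. Since each $f_j$ is geodesic $(\alpha,E)$-preinvex on $S$ with respect to this same geodesic, for every $j\in J$ we have
$$f_j(\gamma_{E(x),E(y)}(t))\leq t f_j(E(x))+(1-t)f_j(E(y)).$$

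The key step is to replace $f_j(E(x))$ and $f_j(E(y))$ on the right by the corresponding suprema. Because $t\geq 0$ and $1-t\geq 0$, the pointwise bounds $f_j(E(x))\leq f(E(x))$ and $f_j(E(y))\leq f(E(y))$, which hold by the definition of the pointwise supremum, can be inserted into the convex combination without reversing any inequality, giving for each fixed $j$
$$f_j(\gamma_{E(x),E(y)}(t))\leq t f(E(x))+(1-t)f(E(y)).$$
The right-hand side is now independent of $j$, hence it is an upper bound for the set $\{\, f_j(\gamma_{E(x),E(y)}(t)) : j\in J \,\}$. Taking the supremum over $j\in J$ on the left and using $f(\gamma_{E(x),E(y)}(t))=\sup_{j\in J}f_j(\gamma_{E(x),E(y)}(t))$ yields
$$f(\gamma_{E(x),E(y)}(t))\leq t f(E(x))+(1-t)f(E(y)),$$
which is exactly the geodesic $(\alpha,E)$-preinvexity of $f$.

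The argument is essentially routine, and I do not expect any obstacle of substance, precisely because the geodesic serves every member of the family at once. The only points requiring care are the two already isolated above: the nonnegativity of the weights $t$ and $1-t$, which is what legitimizes substituting the pointwise bounds into the convex combination, and the hypothesis $\sup_{j\in J}f_j\in\mathbb{R}$, which guarantees that $f$ is a genuine real-valued function on $S$ so that the final supremum over $j$ is finite rather than $+\infty$ and the inequality is meaningful.
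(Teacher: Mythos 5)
Your proof is correct and follows essentially the same route as the paper: both apply the preinvexity inequality for each $f_j$ along the single geodesic $\gamma_{E(x),E(y)}$ and then pass to the supremum over $j$. The only cosmetic difference is that you bound $f_j(E(x))$ and $f_j(E(y))$ by the suprema before taking the supremum on the left, whereas the paper takes suprema on both sides and then uses subadditivity of the supremum; the two manipulations are equivalent.
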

\begin{proof}
Since $S$ is a geodesic $(\alpha,E)$-invex set with respect to $\eta$ and $\alpha$, there exists a unique geodesic  $\gamma_{E(x),E(y)}:[0,1]\rightarrow M$ such that 
$$\gamma_{E(x),E(y)}(0)=E(y),\ \gamma'_{E(x),E(y)}(0)=\alpha(E(x),E(y))\eta(E(x),E(y)),\ \gamma_{E(x),E(y)}(t)\in S, \quad \forall t\in [0,1].$$
From the geodesic $(\alpha,E)$-preinvexity of $f_j$, $j\in J$, we have
$$f_j(\gamma_{E(x),E(y)}(t))\leq tf_j(E(x))+(1-t)f_j(E(y)).$$
Then,
\begin{eqnarray*}
\sup_{j\in J}f_j(\gamma_{E(x),E(y)}(t))&\leq&\sup_{j\in J}\Big( tf_j(E(x))+(1-t)f_j(E(y))\Big)\\
&\leq & t\Big(\sup_{j\in J} f_j(E(x))\Big)+(1-t)\Big(\sup_{j\in J}f_j(E(y))\Big)\\
&=&tf(E(x))+(1-t)f(E(y)).
\end{eqnarray*}
Therefore, we get
$$f(\gamma_{E(x),E(y)}(t))\leq tf(E(x))+(1-t)f(E(y)).$$ This proves the $(\alpha,E)$-preinvexity of $f$.
\end{proof}
\begin{theorem}
Let $S$ be a geodesic $(\alpha,E)$-invex set with respect to $\eta:M\times M\rightarrow TM$ and $\alpha:M\times M\rightarrow\mathbb{R}$. Suppose $F:S\times S\rightarrow\mathbb{R}$ is a continuous geodesic $(\alpha,E)$-preinvex, i.e., $F$ is geodesic $(\alpha,E)$-preinvex with respect to each variable. Then the function $f:S\rightarrow\mathbb{R}$ defined by
$$f(p)=\inf_{q\in S}F(p,q),$$
is a geodesic $(\alpha,E)$-preinvex function on $K$.
\end{theorem}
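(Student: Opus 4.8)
The plan is to verify, directly from the definition, that $f$ satisfies the geodesic $(\alpha,E)$-preinvexity inequality $f(\gamma_{E(x),E(y)}(t))\leq tf(E(x))+(1-t)f(E(y))$ for all $x,y\in S$ and $t\in[0,1]$, where $\gamma_{E(x),E(y)}$ is the unique geodesic furnished by the geodesic $(\alpha,E)$-invexity of $S$. First I would fix $x,y\in S$ and $t\in[0,1]$ and write the left-hand side as $f(\gamma_{E(x),E(y)}(t))=\inf_{q\in S}F(\gamma_{E(x),E(y)}(t),q)$. Since an infimum can only be bounded above through its individual terms, the natural device is an $\varepsilon$-argument. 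Noting that $E(S)\subseteq S$ (forced by the invex-set definition, since $E(y)=\gamma_{E(y),E(y)}(0)\in S$) and, to keep the bookkeeping clean, assuming $E(S)=S$ so that $f(E(x))=\inf_{a\in S}F(E(x),E(a))$, I would for $\varepsilon>0$ choose near-minimizers $q_{1},q_{2}\in S$ with $F(E(x),E(q_{1}))<f(E(x))+\varepsilon$ and $F(E(y),E(q_{2}))<f(E(y))+\varepsilon$, and then form the second-variable geodesic point $q_{t}:=\gamma_{E(q_{1}),E(q_{2})}(t)\in S$, again available from the invexity of $S$.

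Next I would estimate $f(\gamma_{E(x),E(y)}(t))\leq F(\gamma_{E(x),E(y)}(t),q_{t})$ by definition of the infimum, and then apply the geodesic $(\alpha,E)$-preinvexity of $F$ along the product curve $t\mapsto(\gamma_{E(x),E(y)}(t),\gamma_{E(q_{1}),E(q_{2})}(t))$. A single application of preinvexity along this product geodesic yields $F(\gamma_{E(x),E(y)}(t),q_{t})\leq tF(E(x),E(q_{1}))+(1-t)F(E(y),E(q_{2}))<t\big(f(E(x))+\varepsilon\big)+(1-t)\big(f(E(y))+\varepsilon\big)$, whence $f(\gamma_{E(x),E(y)}(t))<tf(E(x))+(1-t)f(E(y))+\varepsilon$; letting $\varepsilon\to0$ gives the required inequality. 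The continuity hypothesis on $F$ enters to guarantee that $f$ is finite-valued and, on relatively compact portions of $S$, that the infima are actually attained, so that the near-minimizers may be replaced by genuine minimizers if one prefers to avoid the limit.

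The step I expect to be the main obstacle is precisely the interchange of the infimum with the convex combination. If one uses only separate preinvexity in each variable, applying preinvexity first in the first and then in the second variable generates the two uncontrolled cross terms $F(E(x),E(q_{2}))$ and $F(E(y),E(q_{1}))$, which do not recombine into $tf(E(x))+(1-t)f(E(y))$; equivalently, the superadditivity $\inf_{q}\big[tF(E(x),q)+(1-t)F(E(y),q)\big]\geq t\inf_{q}F(E(x),q)+(1-t)\inf_{q}F(E(y),q)$ points in the wrong direction. This is the geodesic analogue of the classical fact that the infimal-projection (marginal) function of a merely separately convex integrand need not be convex, whereas for a jointly convex one it is. Consequently the single-step estimate above is legitimate only when $F$ is \emph{jointly} geodesic $(\alpha,E)$-preinvex on the product invex set $S\times S$ with its product geodesic, and I would either adopt this stronger reading of the hypothesis or isolate joint preinvexity as the one additional ingredient required beyond preinvexity in each separate variable.
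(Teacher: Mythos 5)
Your proof is essentially the paper's own argument: the same $\varepsilon$-near-minimizers in the second variable, the same product curve $\bigl(\gamma_{E(x),E(y)},\gamma_{E(q_1),E(q_2)}\bigr)$, and the same single application of preinvexity along it followed by letting $\varepsilon\to 0$. The obstacle you isolate is real but is equally present in the paper's proof: the estimate $F(\gamma(s),\lambda(s))\leq sF(E(p_0),E(q_0))+(1-s)F(E(p_1),E(q_1))$ is precisely \emph{joint} preinvexity along the product geodesic, which the paper invokes without comment although the hypothesis only asserts preinvexity in each variable separately, so your decision to adopt the joint reading (together with your observation that one needs $E(S)\subseteq S$ to identify $f(E(x))$ with $\inf_{a}F(E(x),E(a))$) is the honest way to make the argument close.
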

\begin{proof}
Suppose $\epsilon>0$ is an arbitrary small number and $p_0,p_1\in S$. Since $S$ is a geodesic $(\alpha,E)$-invex set with respect to $\eta$ and $\alpha$, there exists a unique geodesic  $\gamma_{E(p_0),E(p_1)}:[0,1]\rightarrow M$ such that, for all $t\in [0,1],$
$$\gamma_{E(p_0),E(p_1)}(0)=E(p_1),\ \gamma'_{E(p_0),E(p_1)}(0)=\alpha(E(p_0),E(p_1))\eta(E(p_0),E(p_1)),\ \gamma_{E(p_0),E(p_1)}(t)\in S.$$
Now from the definition of $f$, we get, there exists $q_0,q_1\in S$ such that
$$F(p_1,q_1)<f(p_1)+\epsilon,\ F(p_0,q_0)<f(p_0)+\epsilon.$$
By the geodesic $(\alpha,E)$-set with respect to $\eta$ and $\alpha$, there exists exactly one geodesic $\lambda_{E(q_0),E(q_1)}:[0,1]\rightarrow M$ such that, for all $s\in [0,1],$
$$\lambda_{E(q_0),E(q_1)}(0)=E(q_1),\ \lambda'_{E(q_0),E(q_1)}(0)=\alpha(E(q_0),E(q_1))\eta(E(q_0),E(q_1)),\ \lambda_{E(q_0),E(q_1)}(s)\in S.$$
Hence, the curve $\Gamma=(\gamma_{E(p_0),E(p_1)},\lambda_{E(q_0),E(q_1)}):[0,1]\rightarrow M\times M$ is a geodesic in $S\times S$, with
\begin{eqnarray*}
&&\Gamma(0)=(E(p_1),E(q_1)\text{ and }\\
&&\Gamma'(0)=(\alpha(E(p_0),E(p_1))\eta(E(p_0),E(p_1)),\alpha(E(q_0),E(q_1))\eta(E(q_0),E(q_1))).
\end{eqnarray*}
Since $\Gamma$ is a geodesic in $S\times S$, 
$$\Gamma(s)=(\gamma_{E(p_0),E(p_1)}(s),\lambda_{E(q_0),E(q_1)}(s))\in S\times S\ \forall s\in[0,1].$$
Now from the definition of $f$ and the geodesic $(\alpha,E)$-preinvexity of $F$, we get
\begin{eqnarray*}
f(\gamma_{E(p_0),E(p_1)}(s))&=& \inf_{q\in S}F(\gamma_{E(p_0),E(p_1)}(s),E(q))\\
&\leq & F(\gamma_{E(p_0),E(p_1)}(s),\lambda_{E(q_0),E(q_1)}(s))\\
&\leq & sF(E(p_0),E(q_0))+(1-s)F(E(p_1),E(q_1))\\
&\leq & s(f(E(p_0))+\epsilon)+(1-s)(f(E(p_1))+\epsilon)\\
&=& sf(E(p_0))+(1-s)f(E(p_1))+\epsilon.
\end{eqnarray*}
Since $\epsilon$ is arbitrary number, therefore
$$f(\gamma_{E(p_0),E(p_1)}(s))\leq sf(E(p_0))+(1-s)f(E(p_1)).$$
Hence, we get our theorem.
\end{proof}
%\section*{Acknowledgment}

\end{document}